\newtheorem{theorem}{Theorem}
\newtheorem{example}{Example}
\newtheorem{definition}{Definition}
\newtheorem{proposition}{Proposition}
\title{\LARGE \bf
	The Price of Anarchy for Transportation Networks \\ with Mixed Autonomy
}
\author{Daniel A. Lazar$^{1}$, Samuel Coogan$^{2}$ and Ramtin Pedarsani$^{3}$
	\thanks{$^{1}$Daniel Lazar is with the Department of Electrical and Computer Engineering, 
		University of California, Santa Barbara
		{\tt\small dlazar@ece.ucsb.edu}}%
	\thanks{$^{2}$Samuel Coogan is with the School of Electrical and Computer Engineering and the School of Civil and Environmental Engineering, 
		Georgia Institute of Technology
		{\tt\small sam.coogan@gatech.edu}}%
	\thanks{$^{3}$Ramtin Pedarsani is with the Department of Electrical and Computer Engineering, 
		University of California, Santa Barbara
		{\tt\small ramtin@ece.ucsb.edu}}%
}
\begin{document}

	\maketitle
	\thispagestyle{empty}
	\pagestyle{empty}

	\begin{abstract}
		
		We study routing behavior in transportation networks with mixed autonomy, that is, networks in which a fraction of the vehicles on each road are equipped with autonomous capabilities such as adaptive cruise control that enable reduced headways and increased road capacity. Motivated by capacity models developed for such roads with mixed autonomy, we consider transportation networks in which the delay on each road or link is an affine function of two quantities: the number of vehicles with autonomous capabilities on the link and the number of regular vehicles on the link. We particularly study the price of anarchy for such networks, that is, the ratio of the total delay experienced by selfish routing to the socially optimal routing policy. Unlike the case when all vehicles are of the same type, for which the price of anarchy is known to be bounded, we first show that the price of anarchy can be arbitrarily large for such mixed autonomous networks. Next, we define a notion of asymmetry equal to the maximum possible travel time improvement due to the presence of autonomous vehicles. We show that when the degree of asymmetry of all links in the network is bounded by a factor less than 4, the price of anarchy is bounded. We also bound the bicriteria, which is a bound on the cost of selfishly routing traffic compared to the cost of optimally routing additional traffic. These bounds depend on the degree of asymmetry and recover classical bounds on the price of anarchy and bicriteria in the case when no asymmetry exists. Further, we show with examples that these bound are tight in particular cases.
		
		
	\end{abstract}

	\section{INTRODUCTION}
	
	Automobiles are increasingly equipped with autonomous and semi-autonomous technologies such as adaptive cruise control and automated lane-keeping. These technologies are often marketed to consumers as safety or convenience features, but it is apparent that increasing numbers of these \emph{smart} vehicles will have dramatic impact on network-level mobility factors such as traffic congestion and travel times \cite{dot:2015zr}. A primary mechanism whereby such autonomous capabilities can improve mobility is by enabling \emph{platooning} of groups of smart vehicles along the roadway. A platoon consists of two or more vehicles which are able to automatically maintain short headways between them using, \emph{e.g.}, adaptive cruise control (ACC), which allows a vehicle to use radar or LIDAR to automatically maintain a specified distance to the preceding vehicle, or cooperative adaptive cruise control (CACC) which augments ACC with vehicle-to-vehicle communication.
	
	When all vehicles in the system are smart, platooning has the potential to increase network capacity by as much as three-fold \cite{Lioris:2015lq}. Platooning can help to smooth traffic flow and avoid \emph{shockwaves} of slowing vehicles \cite{Shladover:1978lo,Darbha:1999dw,Yi:2006hb, Pueboobpaphan:2010qe, Orosz:2016hb}, and at signalized intersections, platoons can synchronously accelerate at green lights \cite{Lioris:2015lq, Askari:2016fy}. However, in a \emph{mixed} autonomy setting---where only a fraction of the vehicles are smart and the remainder are \emph{regular}, human-driven vehicles---the benefits of platooning are less clear. On freeways, simulation results suggest that high penetration rates of smart vehicles are required to realize significant improvement in traffic flow \cite{Vander-Werf:2002fh, Van-Arem:2006ai, Jiang:2007rr, Yuan:2009th, Kesting:2010wd, Arnaout:2014ul}.
	
	In our prior research, we developed an analytic model for the capacity of roads at signalized intersections with mixed autonomous traffic \cite{lazar2017routing}. There, we consider a queue of vehicles at an intersection and suppose that smart vehicles platoon opportunistically, that is, if a smart vehicle queues behind another smart vehicle, they maintain a short headway along the road. We also proposed and studied a second model in which each smart vehicle maintains a short headway to the preceding vehicle, whether it is also smart or not. Such a scenario may be increasingly possible as adaptive cruise control with passive sensing continues to improve. Our capacity models describe the maximum possible flow rate of vehicles through an intersection as a nonlinear function of the \emph{level of autonomy} of the road, that is, the fraction of smart vehicles on the road. 
	
	In this work, we use the capacity models that we developed in \cite{lazar2017routing} in order to study routing behavior on road networks. We make the assumption that the additional travel time caused by congestion on a road is inversely related to capacity and proportional to the total number of vehicles on the road. Given a network of roads leading from origins to destinations, selfish vehicles will choose the route that minimizes total delay, achieving a Wardrop equilibrium \cite{wardrop1900some, Dafermos:1969qt}. It has long been known that a Wardrop equilibrium may be suboptimal in the sense that a social planner is able to prescribe routes that achieve a lower total delay for all vehicles in the network. The ratio of the socially optimal delay to the worst possible Wardrop equilibrium is called the \emph{price of anarchy} \cite{Koutsoupias:1999fs, papadimitriou2001algorithms}. For affine separable cost functions, when only one type of vehicle is present (\emph{i.e.}, no smart vehicles), it is known that the price of anarchy cannot exceed 4/3 \cite{roughgarden2002bad}.
	
	In a mixed autonomy setting, however, a social planner is able to route smart vehicles differently than regular vehicles to maximize capacity. In this paper, we first show that this increased flexibility leads, remarkably, to an unbounded price of anarchy. Next, we make the assumption that the possible travel time improvement due to the presence of autonomous vehicles is bounded by a factor $k<4$. We call this factor the \emph{degree of asymmetry} of the network. Under this assumption, we prove that the price of anarchy cannot exceed $\frac{4}{4-k}$, which recovers the classical bound when $k=1$, \emph{i.e.}, the case when smart vehicles do not enable any improvement in travel time. We additional show via examples that this bound is tight for $k=1$ and $k=2$.
	
	Next, we provide a bound on the cost of selfish routing relative to the cost of optimally routing additional traffic, called the bicriteria bound \cite{roughgarden2002bad}, \cite{roughgarden2002selfishthesis}. We prove that traffic at a Wardrop Equilibrium will not exceed the cost of optimally routing $1+\frac{k}{4}$ as much traffic of each type, where $k$ is the degree of asymmetry in the network. We demonstrate by example that the bicriteria bound is tight for $k=4$. Similar to the price of anarchy, when the asymmetry is unbounded we show that the bicriteria is unbounded as well. This runs counter to the case of single-type traffic where the bicriteria is bounded by $2$ for any continuous and nondecreasing cost function in which the delay on a road depends only on the traffic on that road \cite{roughgarden2002bad}.
	
	We also consider affine cost functions in which the delay on one road can affect that on another. We provide a bound on the price of anarchy and bicriteria for a class of these cost functions. 
	
	This paper is organized as follows. Section \ref{sct:prev_works} discusses prior work for routing games. Section \ref{sct:motivation_formulation} formally presents the problem formulation and demonstrates that the price of anarchy is unbounded for mixed autonomous traffic. In Section \ref{sec:bound}, we derive bounds on the price of anarchy given a bound on the degree of asymmetry of the network, and in Section \ref{sct:tightness}, we present several examples that demonstrate the tightness of our bound. Concluding remarks are provided in Section \ref{sec:conclusions}.
	
	\section{PREVIOUS WORKS}\label{sct:prev_works}
	
	In this section, we address related models in the literature and highlight the difference between these and our model. Due to the breadth of the field, we give a limited overview of the literature on the price of anarchy -- see \cite{correa2011wardrop} for a broader survey of literature related to Wardrop equilibria and \cite{roughgarden2005selfish} for a wider background on the price of anarchy in transportation networks. For definitions of the terms used in this section see Section \ref{sct:separability_monotonicity}.
	
	Roughgarden and Tardos \cite{roughgarden2002bad} bound price of anarchy and bicriteria for separable monotonic cost functions, and Roughgarden \cite{roughgarden2003topology} gives a more general method for determining price of anarchy in the separable case.
	
	Chau and Sim \cite{chau2003price} bound the price of anarchy for symmetric cost operators with convex social cost for both nonelastic and elastic demands.
	
	Perakis discusses nonseparable, asymmetric, nonlinear costs in \cite{perakis2007price}, though only for monotone latencies \emph{i.e.} satisfying the property 
	\begin{equation}\label{eq:monotone}
	\langle c(z) - c(v),z-v \rangle \ge 0 \; ,
	\end{equation}
	where $\langle \cdot , \cdot \rangle$ denotes the inner product of two vectors.
	
	Correa et. al \cite{correa2008geometric} present a unified framework for deriving price of anarchy and bicriteria for nonseparable monotone functions. 
	
	Unlike these previous works, we present a price of anarchy and bicriteria bound for a class of \emph{nonmonotone} and pairwise separable affine cost functions. We show that our bound simplifies to the classic bounds for affine monotone cost functions in \cite{roughgarden2002bad}, \cite{chau2003price}, and \cite{correa2008geometric} when there is no asymmetry in how the vehicle types affect congestion. 
	
	
	\section{MOTIVATION AND MATHEMATICAL FORMULATION}\label{sct:motivation_formulation}
	
	In this section we motivate our cost function for traffic in mixed autonomy. In Section \ref{sct:motivation}, we show that the price of anarchy and bicriteria are unbounded for congestion games with affine cost functions in mixed autonomy, described in Section \ref{sct:affine}. Prompted by our negative result, in Section \ref{sct:separability_monotonicity} we describe a pairwise separable cost function that is parameterized by the degree of asymmetry, as well as a more general class of nonseparable cost function.
	
	\subsection{Motivation}\label{sct:motivation}
	
	We provide a brief example of unbounded price of anarchy and bicriteria for congestion games under mixed autonomy. This example is similar in design to Pigou's example, as in \cite{roughgarden2002bad}, \cite{pigou1932economics}, and \cite{roughgarden2005selfish}. 
	
	
	\begin{example}\label{ex:unbounded}
		Consider the traffic network in Fig. \ref{fig:unbouded2road}, in which $\frac{1}{2\zeta}$ units of regular traffic and $\frac{1}{2}$ units of smart traffic need to travel from node $s$ to node $t$, where $\zeta \ge 1$. The cost on road $i$, or the delay a car experiences from traveling on that road, is denoted $c_i(x,y)$.
		
		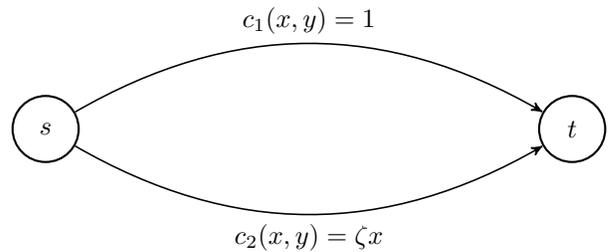
\begin{figure}
			\begin{center}
				\begin{tikzpicture}[->, >=stealth', auto, semithick, node distance=7cm]
				\tikzstyle{every state}=[fill=white,draw=black,thick,text=black,scale=1]
				\node[state]    (0)               {$s$};
				\node[state]    (1)[right of=0]   {$t$};
				\path
				(0) edge[bend left]		node{$c_1(x,y) = 1$}     (1)
				(0) edge[bend right]	node[below]{$c_2(x,y) = \zeta x$}     (1);
				\end{tikzpicture}
			\end{center} 
			\caption{Example of a road network with unbounded price of anarchy and bicriteria.}
			\label{fig:unbouded2road}
		\end{figure}	
		
		The routing at Wardrop Equilibrium has all traffic on the bottom road, for a price of $C^{EQ} = (\frac{1}{2\zeta}\zeta)(\frac{1}{2\zeta} + \frac{1}{2}) = \frac{1}{4 \zeta} + \frac{1}{4}$. The optimal routing has the regular traffic on the top and smart traffic on the bottom with a cost of $C^{OPT} = \frac{1}{2\zeta} \cdot 1 + \frac{1}{2} \cdot 0 = \frac{1}{2\zeta}$. This results in a price of anarchy of $(\zeta+1)/2$.
		
		For the bicriteria, consider a situation in which we have a mass of $\frac{a}{2\zeta}$ units of smart cars and $\frac{a}{2}$ units of regular cars to route. We want to find the $a$ that corresponds to, under optimal routing, a cost of $\frac{1}{4 \zeta} + \frac{1}{4}$. The optimal routing will have cost $\frac{a}{2\zeta}$, which equals $\frac{1}{4 \zeta} + \frac{1}{4}$ when $a=\frac{\zeta+1}{2}$.	
	\end{example}
	
	Here we see that both the price of anarchy and the bicriteria bound grow unboundedly with $\zeta$. Due to this result, we state the following proposition:
	\begin{proposition}
		The price of anarchy and bicriteria are unbounded for networks of mixed autonomy with pairwise separable affine functions.
	\end{proposition}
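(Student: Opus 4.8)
The plan is to prove the proposition as an immediate consequence of Example~\ref{ex:unbounded}. To establish that a quantity is unbounded it suffices to exhibit an infinite family of instances on which it diverges, and Example~\ref{ex:unbounded} furnishes exactly such a family, parameterized by $\zeta \ge 1$, with pairwise separable affine costs $c_1(x,y)=1$ and $c_2(x,y)=\zeta x$. I would therefore argue that both the price of anarchy and the bicriteria factor computed there grow without bound as $\zeta \to \infty$.

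For a fixed $\zeta$, the first step is to confirm that routing all traffic onto the bottom link is the Wardrop equilibrium. At this profile the bottom link carries regular mass $\tfrac{1}{2\zeta}$, so its per-unit delay is $\zeta \cdot \tfrac{1}{2\zeta} = \tfrac12 < 1 = c_1$; hence no infinitesimal vehicle of either type can lower its travel time by deviating to the top link, which certifies the equilibrium (and affine pairwise separable costs make the equilibrium cost unique). This gives $C^{EQ} = \tfrac{1}{4\zeta}+\tfrac14$. Next I would upper bound the optimum by the split routing that sends the regular traffic along the top link and the smart traffic along the bottom link, whose cost is $C^{OPT} \le \tfrac{1}{2\zeta}$ since the smart vehicles contribute nothing to $c_2$. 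Dividing then yields a price of anarchy of at least $C^{EQ}/C^{OPT} \ge (\zeta+1)/2$, and the scaling argument of the example shows that matching $C^{EQ}$ under optimal routing requires a factor $a = (\zeta+1)/2$ more traffic of each type, which is the bicriteria value. Letting $\zeta \to \infty$ sends both $(\zeta+1)/2$ to infinity, establishing the claim.

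There is no deep obstacle here, since the constructive content lives entirely in the example; the only points that warrant care are checking that the exhibited profile really is an equilibrium (the inequality $\tfrac12 < 1$ above) and that the split routing is feasible and strictly cost-reducing. The conceptual heart of the argument---and the feature I would emphasize---is the asymmetry built into $c_2$: the bottom link's delay depends only on the regular mass $x$ and is completely insensitive to the smart mass $y$. A social planner exploits this by pooling all smart vehicles onto the cheap link at no congestion cost, whereas selfish routing, which equalizes only realized travel times, cannot separate the two types and congests the link with regular traffic. It is precisely this unbounded gap between what the cost asymmetry permits the planner to do and what equilibrium achieves that drives both quantities to infinity, and it is what motivates the bounded-asymmetry assumption introduced in the sequel.
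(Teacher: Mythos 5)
Your proposal is correct and follows essentially the same route as the paper, which establishes the proposition precisely via Example~\ref{ex:unbounded}: equilibrium cost $\tfrac{1}{4\zeta}+\tfrac14$, optimal (split) routing cost $\tfrac{1}{2\zeta}$, hence price of anarchy and bicriteria both equal to $(\zeta+1)/2$, diverging as $\zeta\to\infty$. Your added care in verifying the equilibrium condition and in using the split routing only as an upper bound on the optimum is a minor refinement of the same argument.
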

	
	Because of this negative result, in order to provide a bounded price of anarchy and bicriteria in mixed autonomy, we develop a class of cost functions parameterized by the maximum degree of asymmetry.
	
	\subsection{Affine Congestion Game Overview}\label{sct:affine}
	
	Consider a network of $n$ roads, with $m$ origin-destination pairs, each with an associated mass of regular vehicles of volume $\alpha_i$ and mass of smart vehicles of volume $\beta_i$. Since we are considering a nonatomic congestion game, each user controls an infinitesimally small portion of that mass. We denote $\mathcal{X}$ as the set of feasible strategies which result in the entirety of each mass being routed from its origin to its destination, without violating conservation of flow (see \cite{depalma1998optimization} for a more detailed explanation).
	
	The vector of all flows on the $n$ roads is denoted by 
	\begin{align*}
	z = \begin{bmatrix}
	x_1 & y_1 &	x_2 & y_2 & \ldots & x_n & y_n
	\end{bmatrix}^T \; ,
	\end{align*}
	where $x_i$ and $y_i$ represent the mass of regular and smart vehicles, respectively, on road $i$. In this paper, we consider affine cost functions, meaning the cost on the roads resulting from a routing $z \in \mathcal{X}$ can be written as
	\begin{align*}
	c(z) &= Az + b \; .
	\end{align*}
	where $A\in \mathbb{R}_{\ge 0}^{2n}$ and \begin{align*}
	b = \begin{bmatrix}
	b_1 & b_1 &	b_2 & b_2 & \ldots & b_n & b_n
	\end{bmatrix}^T \; .
	\end{align*} 
	
	This results in a social cost of
	\begin{align*}
	C(z) &= \langle c(z), z \rangle \\
	&= (Az + b)^Tz \; .
	\end{align*}
	
	The social cost at optimal routing is then
	\begin{align*}
	C^{OPT}=\inf_{z\in \mathcal{X}}C(z) \; .
	\end{align*}
	We see that the matrix $A$ is the Jacobian of the road cost operator, and the vector $b$ contains the constant terms. $A$ is not in general positive semidefinite, so this is not a convex optimization problem.
	
	\subsection{Separability and Monotonicity}\label{sct:separability_monotonicity}
	
	Having described the basic structure of the congestion game with affine costs, we describe the separability and monotonicity of our model. To do so, we define three notions of separability.
	\begin{definition}\label{df:separable}
		A cost function $c(z) = Az + b$ is \emph{separable} if $A$ is a diagonal matrix.
	\end{definition}
	\begin{definition}\label{df:pairwiseseparable}
		A cost function $c(z) = Az + b$ is \emph{pairwise separable} if $A$ is a blockwise diagonal matrix with 2x2 blocks.
	\end{definition}
	\begin{definition}\label{df:nonseparable}
		A cost function is \emph{nonseparable} if it is neither separable nor pairwise separable.
	\end{definition}
	
	It is clear that separable costs do not model mixed autonomy if regular and smart cars affect delay differently but experience it identically. The slightly more general class of pairwise separable costs does provide a useful model, which we motivate as follow:
	
	We use capacity model I in \cite{lazar2017routing}, which assumes that smart vehicles can platoon behind any vehicle. This results in a capacity (with units vehicles per hour) on road $i$ of
	\begin{align*}
	g_i(x_i,y_i) = \frac{m_i M_i (x_i + y_i)}{Mx_i + my_i}
	\end{align*}
	where $x_i$ and $y_i$ are the flows of regular and smart vehicles respectively, and $m_i$ and $M_i$ are the reciprocals of the time gap required in front of a regular vehicle and smart vehicle, respectively, on road $i$. We propose a cost function on road $i$ (with unit hours per vehicle) of the form 
	\begin{align*}
	c_i(x_i,y_i) &= b_i + r_i\frac{x_i+y_i}{g_i(x_i,y_i)} \\
	&= b_i + \frac{r_i}{m_i}x_i + \frac{r_i}{M_i}y_i \; ,
	\end{align*}
	in which the delay is affine with respect to $x_i$ and $y_i$. Here $b_i$ represents the time it takes to traverse a road in free-flow traffic and $r_i$ determines how congestion scales as road utilization increases with respect to capacity.
	
	In \cite{lazar2017routing}, \cite{milanes2014cooperative}, and \cite{lioris2017platoons}, we see that vehicles not in a platoon require approximately 2.5 times more headway than vehicles in a platoon. Motivated by this observation, we introduce the parameter $k_i \triangleq \frac{M_i}{m_i}$ that represents this factor on road $i$. Letting $a_i \triangleq \frac{r_i}{M_i}$, the cost on each road can be written as
	\begin{align}\label{eq:pairwiseSepCost}
	c_i(x_i,y_i) &= b_i + k_i a_i x_i + a_i y_i \; .
	\end{align}
	
	This leads us to a cost function of the following form:	
	\begin{align*}
	c(z) &= Az + b \\ 
	&= \begin{bmatrix}
	A_1 & 0 & \ldots & 0  \\
	0 & A_2 & \ldots  & 0  \\
	\vdots & \vdots  & \ddots & \vdots  \\
	0 & 0 & \ldots & A_n \\	
	\end{bmatrix}
	z + b
	\end{align*}
	where $A$ is a block-diagonal matrix with blocks
	\begin{align*}
	A_i &= \begin{bmatrix}
	k_i a_i & a_i \\
	k_i a_i & a_i \\
	\end{bmatrix} \; .
	\end{align*} 	
	
	The parameter $k_i$ allows us to represent the degree of asymmetry between the effect of regular and smart traffic on congestion on a road. We allow this factor to differ between roads, but generally expect it to be in the range $k_i \in [1,4]$.	
	
	We find it useful to parameterize a class of cost functions by its maximum degree of asymmetry, as follows:
	\begin{definition}\label{df:Ck}
		Let $\mathcal{C}_k$ denote the class of cost functions for which $\max(k_i, \frac{1}{k_i}) \le k$ $\forall i$ for some constant $k$. We call $k$ the \emph{maximum degree of asymmetry} of this class of cost functions.
	\end{definition}
	
	In the more general model explored in Section \ref{sct:bound_nonseparable}, the delay on one road may depend on the flows on other roads. For example, if one road is fully congested, the roads feeding it will have additional delay. If this is the case, then \eqref{eq:pairwiseSepCost} does not hold and the matrix $A$ is not of block-diagonal form. In Section \ref{sct:bound_nonseparable} we provide a bound for this model under certain circumstances.	
	
	Throughout this paper, we deal with cost functions that satisfy element-wise monotonicity, defined as follows:
	\begin{definition}\label{df:elementwise_monotonicity}
		A class of cost functions $\mathcal{C}$ is \emph{elementwise monotone} if for all cost functions $c(v)$ drawn from $\mathcal{C}$, $\frac{\partial c_j}{\partial v_i}(v) \ge 0$ $\forall i,j$.
	\end{definition}
	
	In other words, a cost function is element-wise monotonic if increasing any flow of vehicles will not decrease the delay on any road. This will be the case for a class of cost functions of the form $c(z) = Az + b$ for which $A$ has only nonnegative entries. Note that this is different from the more general notion of monotonicity in a function described in Section \ref{sct:prev_works}\footnote{Our case is not in general monotone: consider a single road with $c(z) = \begin{bmatrix}
		3 & 1 \\
		3 & 1 \\
		\end{bmatrix}z $, with $x = \begin{bmatrix}
		1 & 0
		\end{bmatrix}^T$ and $y=\begin{bmatrix}
		0 & 2
		\end{bmatrix}^T$. This results in $\langle c(x)-c(y),x-y\rangle = -1$.}. 		
	
	\section{BOUNDING THE PRICE OF ANARCHY}
	\label{sec:bound}
	In this section we present bounds for the price of anarchy and bicriteria. We proceed along the lines proposed in \cite{correa2008geometric}, reviewing that work in Section \ref{sct:bound_prelims} and highlighting the differences that arise for a nonmonotone cost function. We then derive our bounds for nonmonotone pairwise separable costs in Section \ref{sct:our_bound}, and give a bound for nonseparable costs in Section \ref{sct:bound_nonseparable}.
	
	\subsection{Preliminaries}\label{sct:bound_prelims}
	
	Smith \cite{smith1979existence} shows that any flow $z^{EQ}$ at Wardrop equilibrium -- in which all users sharing an origin and destination take paths of equal cost and no unused paths have a smaller cost -- will satisfy the variational inequality for all feasible flows $x$:
	\begin{align}\label{eq:VI}
	\langle c(z^{EQ}), z^{EQ}-z \rangle \le 0 \; .
	\end{align}
	
	A simple proof of this is provided in \cite{depalma1998optimization}.
	
	Correa et. al. \cite{correa2008geometric} use this result to develop a general tool for finding price of anarchy and bicriteria. To that end, they introduce the following parameters:
	\begin{align*}
	\beta(c,v) := \max_{z\in \mathbb{R}_{\ge 0}^{2n}} \frac{\langle c(v) - c(z), z\rangle}{\langle c(v),v \rangle}
	\end{align*}
	(where 0/0=0 by definition), and 
	\begin{align*}
	\beta(\mathcal{C}):= \sup_{c\in\mathcal{C},v\in \mathcal{X}} \beta(c,v) \; ,
	\end{align*}
	where $\mathcal{C}$ represents a class of cost functions and $\mathcal{X}$ is the set of feasible routings.
	
	In the following theorem, we adapt Correa \emph{et. al}'s Theorem 4.2 \cite{correa2008geometric} to when the cost function is not monotone. In the nonmonotone case $\beta(\mathcal{C})$ can be greater than 1, leading to an unbounded price of anarchy. For completeness, we overview the proof of Theorem 4.2 in \cite{correa2008geometric}.
	
	\begin{theorem}\label{thm:correa}
		Let $z^{EQ}$ be an equilibrium of a nonatomic congestion game with cost functions drawn from a class $\mathcal{C}$ of nonseparable nonmonotone but elementwise monotone cost functions.	
		\begin{enumerate}[(a)]
			\item If $z^{OPT}$ is a social optimum for this game, and $\beta(\mathcal{C})\le 1$, then $C(z^{EQ})\le (1-\beta(\mathcal{C}))^{-1}C(z^{OPT})$.
			\item If $w^{OPT}$ is a social optimum for the same game with $1+\beta(\mathcal{C})$ times as many player of each type, then $C(z^{EQ})\le C(w^{OPT})$.
		\end{enumerate}
	\end{theorem}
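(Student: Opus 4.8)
The plan is to follow the argument of Correa \emph{et al.}'s Theorem 4.2 \cite{correa2008geometric}, combining the variational inequality \eqref{eq:VI} with the definition of $\beta(\mathcal{C})$, but carrying $\beta(\mathcal{C})$ symbolically throughout rather than invoking $\beta(\mathcal{C})\le 1$, since nonmonotonicity permits $\beta(\mathcal{C})>1$. The structural facts I would rely on are: (i) any nonnegative feasible flow is an admissible test point in the maximization defining $\beta(c,v)$; (ii) elementwise monotonicity (Definition \ref{df:elementwise_monotonicity}) together with $b\ge 0$ and $z\ge 0$ gives $c(z)\ge 0$ componentwise, hence $\langle c(v),v\rangle\ge 0$ for every feasible $v$; and (iii) the feasible set $\mathcal{X}$ scales linearly with demand, so dividing a flow that routes $\lambda$ times the demand by $\lambda$ yields a flow feasible for the original demand.

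For part (a), I would first instantiate \eqref{eq:VI} at the social optimum, $z=z^{OPT}$, giving $C(z^{EQ})=\langle c(z^{EQ}),z^{EQ}\rangle\le\langle c(z^{EQ}),z^{OPT}\rangle$. I would then split the right-hand side as
\begin{align*}
\langle c(z^{EQ}),z^{OPT}\rangle = \langle c(z^{OPT}),z^{OPT}\rangle + \langle c(z^{EQ})-c(z^{OPT}),z^{OPT}\rangle .
\end{align*}
The first term is $C(z^{OPT})$. For the second, since $z^{OPT}\in\mathbb{R}_{\ge 0}^{2n}$ is admissible in the maximization defining $\beta(c,z^{EQ})$ and $\langle c(z^{EQ}),z^{EQ}\rangle\ge 0$ by (ii), I obtain $\langle c(z^{EQ})-c(z^{OPT}),z^{OPT}\rangle\le\beta(\mathcal{C})\,C(z^{EQ})$. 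Combining gives $C(z^{EQ})\le C(z^{OPT})+\beta(\mathcal{C})\,C(z^{EQ})$, and rearranging (using $\beta(\mathcal{C})\le 1$) yields $C(z^{EQ})\le(1-\beta(\mathcal{C}))^{-1}C(z^{OPT})$.

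For part (b), set $\lambda := 1+\beta(\mathcal{C})$. Since $w^{OPT}$ routes $\lambda$ times the demand, $\tfrac{1}{\lambda}w^{OPT}$ is feasible for the original demand by (iii), so I may use it as the test flow in \eqref{eq:VI}: $C(z^{EQ})\le\langle c(z^{EQ}),\tfrac{1}{\lambda}w^{OPT}\rangle=\tfrac{1}{\lambda}\langle c(z^{EQ}),w^{OPT}\rangle$. Splitting
\begin{align*}
\langle c(z^{EQ}),w^{OPT}\rangle = C(w^{OPT}) + \langle c(z^{EQ})-c(w^{OPT}),w^{OPT}\rangle
\end{align*}
and bounding the last inner product by $\beta(\mathcal{C})\,C(z^{EQ})$ exactly as above gives $\lambda\,C(z^{EQ})\le C(w^{OPT})+\beta(\mathcal{C})\,C(z^{EQ})$; since $\lambda-\beta(\mathcal{C})=1$, this collapses to $C(z^{EQ})\le C(w^{OPT})$.

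I expect the main obstacle to be the bookkeeping around nonmonotonicity rather than any single hard inequality. In the monotone setting one has $\beta(\mathcal{C})\le 1$ automatically, because $\langle c(v)-c(z),v-z\rangle\ge 0$ forces $\langle c(v)-c(z),z\rangle\le\langle c(v),v\rangle$; this both validates the rearrangement in (a) and guarantees a finite bound. Here that inequality can fail, so I must treat $\beta(\mathcal{C})\le 1$ as a hypothesis in (a) and verify that every application of the $\beta$-bound preserves the inequality direction, which is precisely what the nonnegativity $\langle c(v),v\rangle\ge 0$ from elementwise monotonicity secures. I would also confirm that $z^{OPT}$ and $w^{OPT}$ lie in $\mathbb{R}_{\ge 0}^{2n}$ so that they are legitimate competitors in the $\beta$-maximization, since $\beta$ ranges over all nonnegative vectors rather than only feasible flows.
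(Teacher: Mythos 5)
Your proposal is correct and follows essentially the same route as the paper's proof: part (a) is the same split of $\langle c(z^{EQ}),z^{OPT}\rangle$ into $C(z^{OPT})$ plus a term bounded by $\beta(\mathcal{C})C(z^{EQ})$ via the variational inequality, and part (b) is the paper's argument with the scaled flow $(1+\beta(\mathcal{C}))^{-1}w^{OPT}$, just rearranged by multiplying through by $\lambda$ instead of adding and subtracting $\beta(\mathcal{C})\langle c(z^{EQ}),z^{EQ}\rangle$. Your explicit attention to why $\langle c(v),v\rangle\ge 0$ and the admissibility of $z^{OPT},w^{OPT}$ in the $\beta$-maximization are needed is a slightly more careful writeup of the same argument, not a different one.
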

	\begin{proof}
		To prove part (a),
		\begin{align}
		\langle c(z^{EQ}),z \rangle &= \langle c(z), z\rangle + \langle c(z^{EQ})-c(z), z\rangle \nonumber \\
		&\le \langle c(z), z \rangle + \beta(c,z^{EQ})\langle c(z^{EQ}),z^{EQ} \rangle \nonumber \\
		&\le C(z) + \beta(\mathcal{C})C(z^{EQ}) \label{eq:lemma41}
		\end{align}	
		and $C(z^{EQ}) \le \langle c(z^{EQ}),z \rangle$. Completing the proof requires that $\beta(\mathcal{C}) \le 1$.
		
		To prove part (b), element-wise monotonicity implies the feasibility of $(1+\beta(\mathcal{C}))^{-1}w^{OPT}$, and using \eqref{eq:VI},
		\begin{align}
		\langle c(z^{EQ}),z^{EQ} \rangle \le \langle c(z^{EQ}),(1+\beta(\mathcal{C}))^{-1}w^{OPT} \rangle \; . \label{eq:thm1ln0}
		\end{align}
		Then,
		\begin{align}
		C(z^{EQ}) &= (1+\beta(\mathcal{C}))\langle c(z^{EQ}),z^{EQ} \rangle \nonumber \\
		& \quad - \beta(\mathcal{C})\langle c(z^{EQ}), z^{EQ} \rangle \label{eq:thm1ln1} \\
		&\le (1+\beta(\mathcal{C}))\langle c(z^{EQ}),(1+\beta(\mathcal{C}))^{-1}w^{OPT} \rangle \nonumber \\
		& \quad - \beta(\mathcal{C})\langle c(z^{EQ}), z^{EQ} \rangle \label{eq:thm1ln2}\\
		&\le C(w^{OPT}) \; , \label{eq:thm1ln3}
		\end{align}
		where \eqref{eq:thm1ln2} uses \eqref{eq:thm1ln0} and \eqref{eq:thm1ln3} uses \eqref{eq:lemma41}.
	\end{proof}
	
	\subsection{Pairwise Separable Costs}\label{sct:our_bound}
	
	We now present a bound for the price of anarchy and bicriteria for the pairwise separable affine cost function when $k$, the maximum degree of asymmetry of the cost function, is bounded. In particular, when $k < 4$, the price of anarchy is bounded, and the bicriteria is bounded for any $k$. This is formalized as follows:
	
	\begin{theorem}\label{thm:pairwise_separable}
		Let $z^{EQ}$ be an equilibrium of a nonatomic congestion game with cost functions drawn from a class $\mathcal{C}_k$ of affine pairwise separable nonmonotone, elementwise monotone cost functions where $k$ parameterizes the maximum degree of asymmetry in the cost functions.
		\begin{enumerate}[(a)]
			\item If $z^{OPT}$ is a social optimum for this game, and $k < 4$, then $C(z^{EQ})\le \frac{4}{4-k}C(z^{OPT})$.
			\item If $w^{OPT}$ is a social optimum for the same game with $1+\frac{k}{4}$ times as many player of each type, then $C(z^{EQ})\le C(w^{OPT})$.
		\end{enumerate}	
	\end{theorem}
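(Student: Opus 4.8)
The plan is to reduce the entire statement to a single computation, namely the identity $\beta(\mathcal{C}_k) = \frac{k}{4}$, and then quote Theorem~\ref{thm:correa}. Indeed, once this identity is established, part (a) is immediate from Theorem~\ref{thm:correa}(a), since $(1-\beta(\mathcal{C}_k))^{-1} = (1-\tfrac{k}{4})^{-1} = \frac{4}{4-k}$ and the hypothesis $\beta(\mathcal{C}_k)\le 1$ there becomes exactly $k\le 4$; likewise part (b) follows from Theorem~\ref{thm:correa}(b) because $1+\beta(\mathcal{C}_k) = 1+\frac{k}{4}$. Elementwise monotonicity, which holds because $A$ has nonnegative entries, is precisely what licenses the feasibility step in Theorem~\ref{thm:correa}(b), so the whole burden of the proof is the value of $\beta(\mathcal{C}_k)$.

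To compute $\beta(\mathcal{C}_k)$ I would first use pairwise separability to localize to a single road. Since $c_i$ depends only on $(x_i,y_i)$, both the numerator and denominator defining $\beta(c,v)$ decompose as sums over roads,
\[
\langle c(v)-c(z),z\rangle = \sum_{i}\big(c_i(v_i)-c_i(z_i)\big)(x_i^z+y_i^z), \qquad \langle c(v),v\rangle = \sum_i c_i(v_i)(x_i^v+y_i^v).
\]
Writing $N_i$ and $D_i$ for the $i$-th summands, the plan is to prove the pointwise bound $N_i \le \frac{k}{4}D_i$ on each road; summing then gives $\beta(c,v)\le\frac{k}{4}$ for every $c\in\mathcal{C}_k$ and every feasible $v$, hence $\beta(\mathcal{C}_k)\le\frac{k}{4}$.

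The per-road bound is a two-stage one-dimensional optimization. Dropping the index and writing $c(x,y)=b+\alpha x+\gamma y$ with $\alpha=k_i a_i$, $\gamma=a_i$, set $\mu=\min(\alpha,\gamma)$ and $\nu=\max(\alpha,\gamma)$, so that $\nu/\mu = \max(k_i,1/k_i)\le k$ by Definition~\ref{df:Ck}. First maximize $N_i$ over $z\ge 0$: for a fixed total $s=x^z+y^z$ the adversary minimizes $c(z)=b+\mu s$ by placing all of $z$ on the cheaper (low-headway) type, and maximizing $(c(v)-b-\mu s)s$ over $s$ yields $s^\star = \frac{c(v)-b}{2\mu}$ with value $\frac{(c(v)-b)^2}{4\mu}$. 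Then compare against $D_i=c(v)(x^v+y^v)$ using $c(v)-b=\alpha x^v+\gamma y^v\le \nu(x^v+y^v)$ and $c(v)-b\le c(v)$ (as $b\ge 0$), giving $\frac{(c(v)-b)^2}{4\mu}\le \frac{\nu}{4\mu}\,c(v)(x^v+y^v)\le \frac{k}{4}D_i$, as required. For the matching lower bound $\beta(\mathcal{C}_k)\ge\frac{k}{4}$ I would exhibit a single road with $b=0$ and $k_i=k$, and route $v=(T,0)$ (all regular) against $z=(0,\tfrac{k}{2}T)$ (all smart); a direct substitution gives $N/D = \frac{k}{4}$, so the bound is attained and $\beta(\mathcal{C}_k)=\frac{k}{4}$.

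The main obstacle is the two-sided nature of the per-road optimization: the maximizer concentrates the equilibrium-side flow $z$ on the low-headway type to drive $c(z)$ down, while concentrating the reference flow $v$ on the high-headway type to inflate $c(v)$, and the factor separating these two effects is exactly $\nu/\mu=\max(k_i,1/k_i)$. Treating the cases $k_i\ge 1$ and $k_i<1$ uniformly—which is the reason Definition~\ref{df:Ck} is phrased with $\max(k_i,\tfrac{1}{k_i})$—is the one point requiring care; once $\beta(\mathcal{C}_k)=\frac{k}{4}$ is in hand, both conclusions are a direct application of Theorem~\ref{thm:correa}.
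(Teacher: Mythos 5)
Your proposal is correct, and it shares the paper's overall skeleton---both reduce the theorem to the bound $\beta(\mathcal{C}_k)\le\frac{k}{4}$ and then invoke Theorem~\ref{thm:correa}---but your execution of the key per-road computation is genuinely different and, in places, cleaner. The paper bounds $\max_{z\ge 0}\langle c(z^{EQ})-c(z),z\rangle$ road by road by confronting the non-concavity of $\delta(x_i,y_i)=(k_i(x_i^*-x_i)+(y_i^*-y_i))(x_i+y_i)$ head on: it performs two one-variable concave maximizations (in $x_i$ for fixed $y_i$, and in $y_i$ for fixed $x_i$), tracks the boundary constraints, and takes the larger of the two cases, having first discarded the constant term $b$ from both numerator and denominator. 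Your fixed-total reparametrization---for fixed $s=x^z+y^z$ the adversary puts all of $z$ on the cheaper coefficient $\mu$, leaving a one-dimensional concave quadratic in $s$---sidesteps that case analysis entirely, and your factorization $(c(v)-b)^2\le \nu\,(x^v+y^v)\cdot c(v)$ replaces the paper's weighted-ratio step with the $\rho_i,\sigma_i$ weights; it also treats $k_i\ge 1$ and $k_i<1$ uniformly through $\mu=\min(\alpha,\gamma)$, $\nu=\max(\alpha,\gamma)$, where the paper instead reorders the roads into two groups. Both routes arrive at the same per-road maximum $a_i(k_ix_i^*+y_i^*)^2/4$ and hence the same constants. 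One further point in your favor: your matching lower-bound instance (single road, $b=0$, $v=(T,0)$, $z=(0,\tfrac{k}{2}T)$) pins down $\beta(\mathcal{C}_k)=\frac{k}{4}$ exactly, so Theorem~\ref{thm:correa}(b) applies verbatim with $1+\beta(\mathcal{C}_k)=1+\frac{k}{4}$; the paper proves only the upper bound, which suffices but strictly speaking requires the extra (easy) observation that the bicriteria conclusion is monotone in the demand multiplier.
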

	\begin{proof}
		To prove this, we will show $\beta(\mathcal{C}_k) \le \frac{k}{4}$ and then use Theorem \ref{thm:correa}. For ease of notation, let $z^{EQ} \triangleq \begin{bmatrix}
		x_1^* & y_1^* &	x_2^* & y_2^* & \ldots & x_n^* & y_n^*
		\end{bmatrix}$. 
		
		Without loss of generality, and with a slight abuse of notation, we order the roads such that for $1 \le i \le \ell$, $c_i(x_i,y_i)= k_i a_i x_i + a_i y_i$ and for roads $\ell < i \le n$, $c_i(x_i,y_i)= a_i x_i + k_i a_i y_i$, where $k_i \ge 1$. Then,
		\begin{align}
		\beta&(c,z^*) = \max_{z^* \in \mathbb{R}^{2n}_{\ge 0}} \frac{\langle c(z^*) - c(z), z \rangle}{\langle c(z^*), z^* \rangle} \nonumber \\ 
		&\le \frac{\max_{z\in \mathbb{R}^{2n}_{\ge 0}} \langle A(z^*-z),z \rangle}{\langle Az^*, z^* \rangle} \nonumber \\
		&= \frac{\sum_{i=1}^{\ell}a_i \max_{x_i,y_i\ge 0}(k_i (x_i^*-x_i)+(y_i^*-y_i))(x_i+y_i)}{\langle Az^*, z^* \rangle} \nonumber \\
		&\; + \frac{\sum_{i=\ell+1}^{n}a_i \max_{x_i,y_i\ge 0}((x_i^*-x_i)+k_i (y_i^*-y_i))(x_i+y_i)}{\langle Az^*, z^* \rangle}. \label{eq:thm2l0}
		\end{align}
		
		We will bound the first term in \eqref{eq:thm2l0}, but the following can be done for the second term as well. Consider the maximization in the numerator of the first term of \eqref{eq:thm2l0}:
		\begin{align*}
		&\max_{x_i,y_i\ge 0} \delta(x_i,y_i)  \quad \quad \text{where}\\
		&\delta(x_i,y_i) =(k_i (x_i^*-x_i)+(y_i^*-y_i))(x_i+y_i) \; .
		\end{align*}
		
		The function $\delta(x_i,y_i)$, is not concave for $k_i \neq 1$. Because of this we split it into two cases and take the maximum of the two.
		
		CASE 1:
		
		First we can consider maximizing with respect to $x_i$ in terms of $y_i$. 
		
		We see that $\frac{\partial^2 \delta}{\partial x_i^2}(x_i,y_i) = -2k$, so the function is concave with respect to $x_i$. Then, $\frac{\partial \delta}{\partial x_i}(x_i,y_i) = k_i x^*_i + y^*_i-2k_ix_i-(k+1)y_i$, giving us a minimum when $\tilde{x}_i(y_i) = \frac{k_i x^*_i +y^*_i -(k_i+1)y_i}{2k}$.
		
		Plugging this in, we have 
		\begin{align*}
		\max_{y_i \ge 0} \delta(\tilde{x}_i(y_i), y_i) = \max_{0 \le y_i \le \frac{k_i x^*_i + y^*_i}{k+1}}  \frac{(k_i x^*_i + y^*_i + (k-1)y_i)^2}{4k_i} \; .
		\end{align*}
		
		The expression above is clearly increasing in $y_i$. However, the condition that $x_i \ge 0$ implies that $y_i \le \frac{k_i x^*_i + y^*_i}{k+1}$. Using this value for $y_i$, we have
		\begin{align*}
		\max_{y_i \ge 0} \delta(\tilde{x}_i(y_i), y_i) = \frac{k (k x^*_i + y^*_i)^2}{(k+1)^2} \; .
		\end{align*}
		
		CASE 2:
		
		Next we consider maximizing with respect to $y_i$ in terms of $x_i$. We check that $\frac{\partial^2 \delta}{\partial x_i^2}(x_i,y_i) = -2$, then find $\frac{\partial \delta}{\partial x_i}(x_i,y_i) = k_i x^*_i + y^*_i-(k_i+1)x_i-2y_i$, giving us a minimum at $\tilde{y}_i(x_i) = \frac{k_ix^*_i + y^*_i - (k_i+1)x_i}{2}$. Using this, we have
		\begin{align*}
		\max_{x_i\ge 0} \delta(x_i,\tilde{y}_i(x_i)) = \max_{0 \le x_i\le \frac{k_i x^*_i + y^*_i}{2(k+1)}} \frac{(k_i x^*_i + y^*_i - (k-1)x_i)^2}{4} \; .
		\end{align*}
		
		We know that $y_i \ge 0$, so $x_i \le \frac{k_i x^*_i + y^*_i}{2(k+1)} \le k_i x^*_i + y^*_i$. Therefore, for the valid range of $x_i$, the overall expression decreases with $x_i$. We can then maximize the above expression by setting $x_i$ to 0. This provides
		\begin{align*}
		\max_{x_i\ge 0} \delta(x_i,\tilde{y}_i(x_i)) = \frac{(k_i x^*_i + y^*_i)^2}{4} \; .
		\end{align*}
		
		We see that case 2 gives us a greater value than case 1 for $k \ge 1$. We can use a similar analysis for roads $\ell < i \le n$ to find
		\begin{align}
		& \beta(c,z^*) \nonumber \\
		&\le \frac{1}{4} \frac{\sum_{i=1}^{\ell}\rho_i(k_i x^*_i + y^*_i) + \sum_{i=\ell+1}^{n}\sigma_i(x^*_i + k_i y^*_i)}{\sum_{i=1}^{\ell}\rho_i(x_i^*+y_i^*)+\sum_{i=\ell+1}^{n}\sigma_i(x_i^*+y_i^*)} \nonumber \\
		&= \frac{k}{4} \frac{\sum_{i=1}^{\ell}\rho_i(k_i x^*_i + y^*_i) + \sum_{i=\ell+1}^{n}\sigma_i(x^*_i + k_i y^*_i)}{\sum_{i=1}^{\ell}\rho_i(kx_i^*+ky_i^*)+\sum_{i=\ell+1}^{n}\sigma_i(kx_i^*+ky_i^*)} \nonumber \\
		&\le \frac{k}{4} \label{eq:thm2l1}\; ,
		\end{align}
		where $\rho_i \triangleq a_i(k_i x^*_i + y^*_i)$ and $\sigma_i \triangleq a_i(x^*_i + k_i y^*_i)$. The fact that $\frac{\sum_{i=1}^{n}w_i}{\sum_{i=1}^{n}v_i}\le 1$ when $0 \le w_i \le v_i$ implies Equation \eqref{eq:thm2l1}, since $k \ge k_i \ge 1$ $\forall i$. We apply Theorem \ref{thm:correa} to find a price of anarchy bound of $\frac{4}{4-k}$ and bicriteria bound of $1+\frac{k}{4}$.
	\end{proof}
	
	\subsection{Nonseparable costs}\label{sct:bound_nonseparable}
	
	Having discussed pairwise separable costs (Definition \ref{df:pairwiseseparable}), where the delay on each road depends only on the vehicles on that road, we now consider nonseparable costs (Definition \ref{df:nonseparable}). As an example, consider a series of roads, each one feeding into the next; if one road is fully congested, this will increase the delay on the roads feeding it, resulting in cascading congestion. Another scenario of nonseparable costs is when intersecting streets affect the traffic on each other \cite{correa2011wardrop}, such as in a signalized intersection that senses traffic and adjusts its duty cycle accordingly. In that case, the volume of traffic on a road will affect the delay on the road perpendicular to it.
	
	
	To put this in more concrete terms, consider a road feeding into another narrower road. We model the congestion on the second road as comparatively affecting that on the first road by a factor of $\mu$. This results in a cost function of
	\begin{align*}
	c(z)=\begin{bmatrix}
	k_1 a_1 & a_1 & \mu k_2 a_2 & \mu a_2 \\
	k_1 a_1 & a_1 & \mu k_2 a_2 & \mu a_2 \\
	0 & 0 & k_2 a_2 & a_2 \\
	0 & 0 & k_2 a_2 & a_2 
	\end{bmatrix}z + b \; .\\
	\end{align*}	
	
	With this motivation, we consider the affine cost functions $c(x) = (Ax)^T + b$, where $A$ is no longer a 2x2 block-diagonal matrix. We consider the case that $A$ can be written as the sum of a block diagonal matrix $Q$ (with 2x2 blocks) with strictly positive block diagonal entries and a positive definite matrix $P$. \footnote{Note that if $P$ is a diagonal dominant mapping, \emph{i.e.} $P_{ii} > \frac{1}{2}\sum_{j\neq i}|P_{ij} + P_{ji}|$, then it is positive definite \cite{depalma1998optimization}. In this case, in order to also guarantee that the block diagonal components of $Q$ have strictly positive entries, we require
		\begin{align*}
		A_{ii} &> \frac{1}{2} \sum_{j \neq i, i-1}|A_{ij} + A_{ji}| \; \text{for } i \text{ even} \; , \\
		A_{ii} &> \frac{1}{2} \sum_{j \neq i, i+1}|A_{ij} + A_{ji}| \; \text{for } i \text{ odd} \; .
		\end{align*}
		
		This, however, is a sufficient but not necessary condition for the application of Theorem \ref{thm:nonseparable}.} 

	We describe the bounds we can establish under these conditions in the following theorem:
	\begin{theorem}\label{thm:nonseparable}
		Let $z^{EQ}$ be an equilibrium of a nonatomic congestion game with cost function $c(z) = (Az)^T + b$. Suppose $A$ can be split into $Q$, which is a (2x2) block diagonal matrix with strictly positive entries on the block diagonal, and $P$, which is positive definite, such that $A = Q + P$. Let $k$ be the maximum degree of asymmetry for the cost function defined by $Q$.
		\begin{enumerate}[(a)]
			\item If $z^{OPT}$ is a social optimum for this game, and if $k < 4$, then $C(z^{EQ})\le (\frac{4}{4-k}+\eta^2)C(z^{OPT})$, where $\eta^2 = \lambda_{max}(S^{-1/2}P^TS^{-1}PS^{-1/2})$ and $S = (P+P^T)/2$.
			\item If $w^{OPT}$ is a social optimum for the same game with $2+\frac{k}{4}$ times as many player of each type, then $C(z^{EQ})\le C(w^{OPT})$.
		\end{enumerate}		
	\end{theorem}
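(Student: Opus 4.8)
The plan is to run the same $\beta$-style accounting as in Theorem~\ref{thm:correa}, but to split the misalignment term induced by the Jacobian according to the hypothesis $A = Q + P$. Since the cost is affine, for any feasible $z$ the variational inequality \eqref{eq:VI} gives $C(z^{EQ}) \le \langle c(z^{EQ}), z\rangle = C(z) + \langle Q(z^{EQ}-z), z\rangle + \langle P(z^{EQ}-z), z\rangle$. The block-diagonal term I would bound verbatim as in \eqref{eq:thm2l0}--\eqref{eq:thm2l1} of the proof of Theorem~\ref{thm:pairwise_separable}, since $Q$ has exactly the pairwise-separable structure with degree of asymmetry $k$; this yields $\langle Q(z^{EQ}-z), z\rangle \le \tfrac{k}{4}\langle Qz^{EQ}, z^{EQ}\rangle \le \tfrac{k}{4}C(z^{EQ})$. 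All the new work is in controlling the coupling term $\langle P(z^{EQ}-z), z\rangle$, and the two parts of the theorem call for two different estimates of it.

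For the bicriteria bound (b) I would use that $P \succ 0$ makes the linear map $z \mapsto Pz$ a \emph{monotone} operator in the sense of \eqref{eq:monotone}, i.e. $\langle P(u-v), u-v\rangle \ge 0$. With $w := w^{OPT}$ and $z := (2+\tfrac{k}{4})^{-1}w$ feasible (elementwise monotonicity, as in Theorem~\ref{thm:correa}(b)), monotonicity applied to the pair $(z^{EQ}, w)$ gives $\langle Pz^{EQ}, w\rangle + \langle Pw, z^{EQ}\rangle \le \langle Sz^{EQ}, z^{EQ}\rangle + \langle Sw, w\rangle$ with $S = (P+P^{T})/2$, hence $\langle P(z^{EQ}-w), w\rangle \le \langle Sz^{EQ}, z^{EQ}\rangle \le C(z^{EQ})$ once the favorable cross term $\langle Pw, z^{EQ}\rangle$ is discarded. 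Thus the positive-definite block contributes a clean additive $1$ while the block-diagonal part contributes $\tfrac{k}{4}$, so the effective constant fed into the Theorem~\ref{thm:correa}(b) bookkeeping is $1+\tfrac{k}{4}$ and the demand must be scaled by $2+\tfrac{k}{4}$. Crucially this estimate is $\eta$-free, which is why no spectral quantity appears in (b).

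For the price of anarchy (a) I would take $z := z^{OPT}$ and estimate the coupling term spectrally. Writing $\langle Pz^{EQ}, z^{OPT}\rangle = \langle S^{-1/2}Pz^{EQ},\, S^{1/2}z^{OPT}\rangle$ and applying Cauchy--Schwarz (or a weighted Young inequality), the relevant $z^{EQ}$ factor is $(z^{EQ})^{T}P^{T}S^{-1}Pz^{EQ} \le \eta^2\,(z^{EQ})^{T}Sz^{EQ}$, which is precisely the defining property of $\eta^2 = \lambda_{max}(S^{-1/2}P^{T}S^{-1}PS^{-1/2})$, while the $z^{OPT}$ factor is $\|S^{1/2}z^{OPT}\|^2 = \langle Sz^{OPT}, z^{OPT}\rangle \le C(z^{OPT})$. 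Feeding this together with the $\tfrac{k}{4}$ estimate into the rearrangement $C(z^{EQ})(1-\tfrac{k}{4}) \le C(z^{OPT}) + \langle Pz^{EQ}, z^{OPT}\rangle$ of Theorem~\ref{thm:correa}(a) is intended to separate the $k$-dependence (multiplicative, giving $\tfrac{4}{4-k}$) from the $\eta$-dependence (additive, giving $+\eta^2$). I would also record the sanity check $\eta^2 \ge 1$, which follows from $P^{T}S^{-1}P = S + W^{T}S^{-1}W \succeq S$ for $W = (P-P^{T})/2$, so that $\eta^2 = 1$ exactly when $P$ is symmetric.

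The hard part is the coupling term $\langle Pz^{EQ}, z^{OPT}\rangle$ for the price-of-anarchy bound. Because $A$ is neither symmetric nor monotone here, unlike \cite{correa2008geometric} one cannot fold everything into a single $\beta(\mathcal{C}) \le 1$; instead the $k$- and $\eta$-contributions must be disentangled into an \emph{additive} bound, and a naive Cauchy--Schwarz/Young split leaves a residual multiple of $C(z^{EQ})$ on the wrong side that is strictly larger than the clean target $\tfrac{4}{4-k}+\eta^2$. The delicate step is therefore to choose the normalization so that this $z^{EQ}$-residual is exactly absorbed by the $\tfrac{k}{4}$-slack while the $z^{OPT}$-part collapses to $\eta^2 C(z^{OPT})$; I expect this to require splitting the social cost itself into its $Q$- and $P$-parts and bounding each ratio separately, rather than bounding the single scalar $\langle Pz^{EQ}, z^{OPT}\rangle$ at once. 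A secondary check, needed in both parts, is that every cross term discarded en route (the term $\langle Pw, z^{EQ}\rangle$ above, and the boundary contributions when the constraint $z \ge 0$ is relaxed to $z \in \mathbb{R}^{2n}$ in the maximization) has the sign guaranteed by $A \ge 0$ and $P \succ 0$.
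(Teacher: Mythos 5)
Your part (b) is essentially the paper's own argument. The paper likewise decomposes $\beta(c,v)$ into a $Q$-term and a $P$-term, bounds the former by $\tfrac{k}{4}$ via the computation in Theorem~\ref{thm:pairwise_separable} and the latter by $1$ via monotonicity of the positive definite part (reproducing the argument of \cite{correa2008geometric} that monotone classes have $\beta \le 1$), and then feeds $\beta(\mathcal{C}) \le 1+\tfrac{k}{4}$ into Theorem~\ref{thm:correa}(b); your symmetrization through $S$ is just a rephrasing of that monotonicity step. One caveat, which you flag and which you in fact share with the paper: discarding the cross term, i.e.\ asserting $\langle Pw, z^{EQ}\rangle \ge 0$, is \emph{not} implied by $P \succ 0$ alone, since a positive definite matrix can have negative entries; both your argument and the paper's implicitly assume the $P$-component of the latency stays nonnegative on feasible flows.

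Part (a) is where the genuine gap lies, and your own diagnosis is accurate: after the rearrangement $C(z^{EQ})\left(1-\tfrac{k}{4}\right) \le C(z^{OPT}) + \langle P(z^{EQ}-z^{OPT}), z^{OPT}\rangle$, any Cauchy--Schwarz/Young estimate $\langle Pz^{EQ}, z^{OPT}\rangle \le \tfrac{\eta^2}{2\lambda}\langle Sz^{EQ}, z^{EQ}\rangle + \tfrac{\lambda}{2}\langle Sz^{OPT}, z^{OPT}\rangle$ leaves a residual multiple of $C(z^{EQ})$ that must be absorbed into the slack $1-\tfrac{k}{4}$, and after optimizing $\lambda$ the $k$- and $\eta$-dependencies remain multiplicatively entangled (e.g.\ for $k=0$, $\eta^2=1$ the best such bound is about $2.6$, not the target $2$). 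No choice of normalization inside the variational inequality collapses this to the additive bound $\tfrac{4}{4-k}+\eta^2$; your proposal ends by conjecturing that the social cost itself must be split, but does not carry that out, so the proof of (a) is incomplete.

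That splitting is exactly what the paper does, and it avoids the cross term entirely by never putting $Q$ and $P$ inside the same variational inequality. Since all latency components are nonnegative, the paper bounds the price-of-anarchy ratio directly, term by term:
\begin{align*}
\frac{(Az^{EQ}+b)^T z^{EQ}}{(Az+b)^T z} \;\le\; \frac{(Qz^{EQ}+b)^T z^{EQ}}{(Qz+b)^T z} \;+\; \frac{(Pz^{EQ})^T z^{EQ}}{(Pz)^T z} \; ,
\end{align*}
which is \eqref{eq:thm3l1}, an instance of $\tfrac{a_1+a_2}{b_1+b_2}\le \tfrac{a_1}{b_1}+\tfrac{a_2}{b_2}$ for nonnegative quantities. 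The first supremum is bounded by $\tfrac{1}{1-\beta(\mathcal{C}_k)} = \tfrac{4}{4-k}$ using the Theorem~\ref{thm:pairwise_separable} computation, and the second is not derived from scratch at all: it is the known price-of-anarchy bound $\eta^2$ for monotone affine costs with zero constant term, quoted from \cite{perakis2007price} and \cite{correa2008geometric}. In other words, $\eta^2$ enters as a black-box result applied to the decoupled $P$-game, not through a Cauchy--Schwarz argument of the kind you attempt; without either this ratio-splitting step or that cited result, part (a) does not close. (This also explains why the paper's part (a) cannot go through Theorem~\ref{thm:correa}(a): folding the $P$-term into $\beta$ pushes $\beta$ above $1$, where the $(1-\beta)^{-1}$ mechanism gives nothing.)
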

	\begin{proof}
		For part (a), we split the price of anarchy into two components, as
		\begin{align}
		\frac{C^{EQ}}{C^{OPT}} &= \sup_{z\in \mathcal{X}} \frac{(Az^{EQ}+b)^Tz^{EQ}}{(Az+b)^Tz}  \nonumber \\
		&= \sup_{z\in \mathcal{X}} \frac{((Q+P)z^{EQ}+b)^Tz^{EQ}}{((Q+P)z+b)^Tz} \nonumber \\
		&\le \sup_{z\in \mathcal{X}} \frac{(Qz^{EQ}+b)^Tz^{EQ}}{(Qz+b)^Tz} + \sup_{z\in \mathcal{X}} \frac{(Pz^{EQ})^Tz^{EQ}}{(Pz)^Tz} \label{eq:thm3l1} \\
		&\le \frac{1}{1-\beta(\mathcal{C}_k)} + \eta^2 \label{eq:thm3l2} \\
		&= \frac{4}{4-k} + \eta^2 \; . \label{eq:thm3l3}
		\end{align}
		
		The bound \eqref{eq:thm3l1} follows from all latencies being nonnegative, \eqref{eq:thm3l2} follows from \cite{correa2008geometric} and \cite{perakis2007price} (see the comment about the price of anarchy for costs with no constant term), and the \eqref{eq:thm3l3} is proved in the proof of Theorem \ref{thm:pairwise_separable}. 
		
		For part (b), we use the same notion of $\beta(\mathcal{C})$ as in the proofs for Theorems \ref{thm:correa} and \ref{thm:pairwise_separable}, as follows:	
		\begin{align*}
		\beta(c,v) &= \max_{z\in\mathbb{R}_{\ge 0}^{2n}}\frac{\langle c(v)-c(z),z \rangle}{\langle c(v),v \rangle} \\
		&= \frac{\max_{z\in\mathbb{R}_{\ge 0}^{2n}} \langle (Q+P)(v-z),z \rangle}{\langle (Q+P)v + b,v \rangle} \\
		&\le \frac{\max_{z\in\mathbb{R}_{\ge 0}^{2n}} \langle Q(v-z),z \rangle}{\langle (Q+P)v + b,v \rangle} + \frac{\max_{z\in\mathbb{R}_{\ge 0}^{2n}} \langle P(v-z),z \rangle}{\langle (Q+P)v + b,v \rangle} \\
		&\le \frac{\max_{x\in\mathbb{R}_{\ge 0}^{2n}} \langle Q(v-z),z \rangle}{\langle Qv + b,v \rangle} + \frac{\max_{z\in\mathbb{R}_{\ge 0}^{2n}} \langle P(v-z),z \rangle}{\langle Pv + b,v \rangle} \\
		&= \beta(c_1,v) + \beta(c_2,v)
		\end{align*}
		Here $c_1$ and $c_2$ represent cost functions drawn from $\mathcal{C}_k$ and $\tilde{\mathcal{C}}$, respectively, where $k$ is the maximum degree of asymmetry of the cost function $c(z) = Qz+b$ and $\tilde{\mathcal{C}}$ denotes the set of monotone cost functions.
		
		De Palma and Nesterov \cite{depalma1998optimization} show that a cost function $c(z)$ is monotone if $c'(z)$ is positive definite. Furthermore, Correa \emph{et. al.} show that a class $\mathcal{C}$ consisting of monotone cost functions has $\beta(\mathcal{C}) \le 1$. This is easily demonstrated as follows. Using \eqref{eq:monotone} with $z,v \in \mathbb{R}^{2n}_{\ge 0}$,
		\begin{align*}
		\langle c(v)-c(z), z \rangle &\le \langle c(v)-c(z), v \rangle \; ,  \\
		\end{align*}
		thus,
		\begin{align*}
		1 & \ge \frac{\langle c(v)-c(z), z \rangle}{\langle c(v)-c(z), v \rangle} \\ 
		& \ge \frac{\langle c(v)-c(z), z \rangle}{\langle c(v), v \rangle} \\
		& \ge \beta(c,v)
		\end{align*}
		Because of this,
		\begin{align*}
		\beta(\mathcal{C}) &= \sup_{c\in \mathcal{C},v\in \mathcal{X}} \beta(c,v) \\
		&\le \sup_{c\in \mathcal{C}_k,v\in \mathcal{X}} \beta(c,v) + \sup_{c\in \tilde{\mathcal{C}},v\in \mathcal{X}} \beta(c,v) \\
		&\le \frac{k}{4} + 1 \; .
		\end{align*}
		
		Here $\tilde{\mathcal{C}}$ denotes monotone cost functions. Applying Theorem \ref{thm:correa} completes the proof.
	\end{proof}
	
	\section{TIGHTNESS OF THE BOUND}\label{sct:tightness}
	
	In this section, we discuss the tightness of the bound derived in Section \ref{sct:our_bound}. In Section \ref{sct:examples} we provide two examples: Example \ref{ex:two_sided} shows that our price of anarchy is tight for $k=2$ and our bicriteria bound is tight for $k=4$ when there can be two-sided asymmetry, \emph{i.e.} $k_i$ can be greater or less than 1. In a more realistic scenario, we expect autonomous vehicles to result in the same amount or less congestion than regular cars for all roads. In light of this, we provide Example \ref{ex:two_sided} of \emph{one-sided} asymmetry, in which $k_i\ge 1$ $\forall i$. In Section \ref{sct:bound_discussion}, we discuss the tightness of the bound with respect to both of these scenarios.
	
	\subsection{Examples}\label{sct:examples}
	
	\begin{example}\label{ex:two_sided}
		Consider the traffic network in Fig. \ref{fig:twoSidedAsym}, which is parameterized by the degree of asymmetry, $k$. We wish to transport 1 unit regular traffic and 1 unit smart traffic across the network.
		
		\begin{figure}
			\begin{center}
				\begin{tikzpicture}[->, >=stealth', auto, semithick, node distance=7cm]
				\tikzstyle{every state}=[fill=white,draw=black,thick,text=black,scale=1]
				\node[state]    (0)               {$s$};
				\node[state]    (1)[right of=0]   {$t$};
				\path
				(0) edge[bend left]		node[above]{$c_1(x,y) = kx+y$}     (1)
				(0) edge[bend right]		node[below]{$c_2(x,y) = x+ky$}     (1);
				\end{tikzpicture}
			\end{center}
			\caption{Example of a road network with two-sided asymmetry.}
			\label{fig:twoSidedAsym}
		\end{figure}
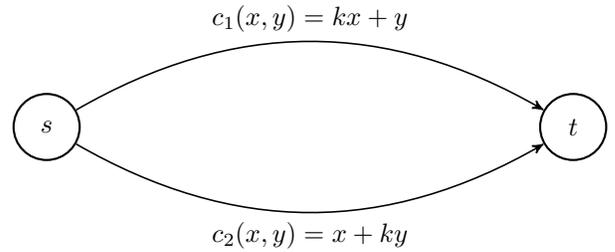		
		
		The worst-case Nash equilibrium has all regular traffic on the top link and all the smart traffic on the bottom link, for a cost of $C^{NE} = 2k$. The optimal routing has this routing reversed, for a cost of $C^{OPT}=2$. This gives us $\frac{C^{EQ}}{C^{OPT}} = k$.
		
		We find the bicriteria by finding how much traffic we could optimally route for a cost of $2k$. Consider $p$ units regular and $p$ units of smart vehicles, which would have optimal routing cost $2p^2$. Setting $2p^2=2k$, we find the bicriteria is $\sqrt{k}$.
	\end{example}
	
	\begin{example}\label{ex:one_sided}
		Consider the traffic network in Fig. \ref{fig:oneSidedAsym}, which is parameterized by $k$. Here we wish to transport $\frac{2\sqrt{k} - 1}{2k}$ units regular traffic and $\frac{1}{2}$ units smart traffic across the network.
		
		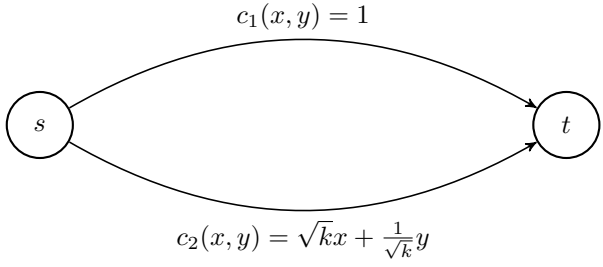
\begin{figure}
			\begin{center}
				\begin{tikzpicture}[->, >=stealth', auto, semithick, node distance=7cm]
				\tikzstyle{every state}=[fill=white,draw=black,thick,text=black,scale=1]
				\node[state]    (0)               {$s$};
				\node[state]    (1)[right of=0]   {$t$};
				\path
				(0) edge[bend left]		node[above]{$c_1(x,y) = 1$}     (1)
				(0) edge[bend right]	node[below]{$c_2(x,y) = \sqrt{k}x+\frac{1}{\sqrt{k}}y$}     (1);
				\end{tikzpicture}
			\end{center}
			\caption{Example of a road network with one-sided asymmetry.}
			\label{fig:oneSidedAsym}
		\end{figure}		
		
		At the Wardrop Equilibrium, all traffic will take the bottom route for a delay of $1$, which gives us cost
		\begin{align*}
		C^{EQ} &= \frac{2\sqrt{k} - 1}{2k} + \frac{1}{2} = \frac{k + 2 \sqrt{k} - 1}{2k}
		\end{align*}
		
		In optimal routing, we have regular traffic on top, smart traffic on the bottom. This gives us
		\begin{align*}
		C^{OPT} &= \frac{2\sqrt{k} - 1}{2k} +  \frac{1}{2\sqrt{k}} \frac{1}{2} = \frac{5\sqrt{k}-2}{4k} \; ; \\
		\frac{C^{EQ}}{C^{OPT}} &= \frac{2k + 4 \sqrt{k} - 2}{5\sqrt{k}-2}
		\end{align*}
		
		We find the bicriteria by setting the cost of routing $p$ times as much traffic optimally equal to the original cost at equilibrium. This gives us $p=\sqrt{\frac{2 (k-3) \sqrt{k}+1}{k}+8}+\frac{1}{\sqrt{k}}-2$.	
	\end{example}
	
	\subsection{Discussion}\label{sct:bound_discussion}
	
	\begin{figure}
		\includegraphics[width=1\linewidth]{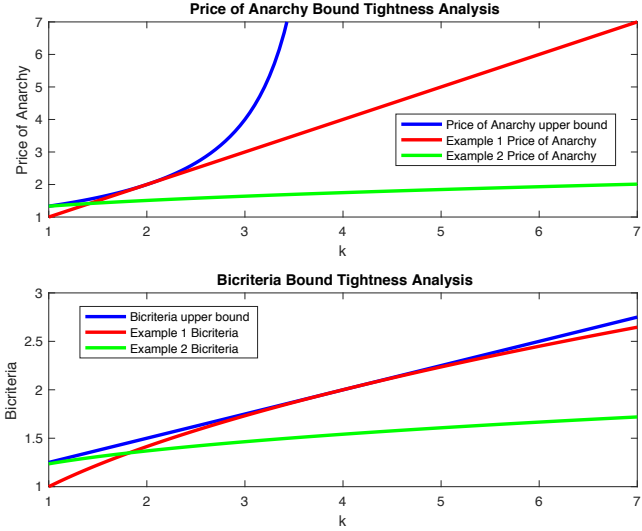}
		\caption{Tightness of the bounds for price of anarchy and bicriteria. We see that the PoA bounds is tight for $k=1$ and $k=2$, and the bicriteria bound is tight for $k=4$.}
		\label{fig:soln_seq}
	\end{figure}
	
	We begin by discussing the price of anarchy. Our bound for price of anarchy is $\frac{4}{4-k}$, and example 1 shows $\frac{C^{NE}}{C^{OPT}} = k$ and example 2 shows $\frac{C^{NE}}{C^{OPT}} = \frac{2k + 4\sqrt{k}-2}{5\sqrt{k}-2}$. The first example shows our bound is tight for $k=2$ and the second example shows our bound is tight for $k=1$. Indeed, when $k=1$ we recover the classical price of anarchy bound found in \cite{roughgarden2002bad}.
	
	For the bicriteria, our bound is $1+\frac{k}{4}$. Example 1 provides a bicriteria of $\sqrt{k}$ and example 2 has a bicriteria of $\sqrt{\frac{2 (k-3) \sqrt{k}+1}{k}+8}+\frac{1}{\sqrt{k}}-2$. Example 1 shows our bound is tight for $k=4$ and example 2 shows our bound is nearly tight for $k=1$. Furthermore, we see that this bound matches that in \cite{correa2008geometric} for $k=1$.
	
	Figure \ref{fig:soln_seq} illustrates these comparisons. In both cases, our upper bound diverges from these lower bounding examples for large $k$. Therefore, it is unknown if our bound is tight in that regime. However, realistic circumstances lead to $k \approx 2.5$, which is in our nearly-tight region for both price of anarchy and bicriteria.
	
	It is worth noting that under the construction in \cite{correa2008geometric} and in Theorem \ref{thm:correa}, there can be no bound on the price of anarchy for networks with $k \ge 4$. Observe that in Example \ref{ex:two_sided} for $k = 4$, the bicriteria is $2$. This means that $\beta(\mathcal{C}_{k=4}) \ge 1$, so the bound on the price of anarchy does not hold.
	
	\section{CONCLUSIONS}\label{sec:conclusions}
	
	In this paper, we have presented pairwise separable and nonseparable cost functions for traffic networks under mixed autonomy. We demonstrate that the price of anarchy and bicriteria is unbounded without constraints on the asymmetry in the difference in how the additon of smart and regular vehicles affects congestion. We then established bounds for the price of anarchy and bicriteria, parameterized by the degree of asymmetry of the network, for both the case of pairwise separable and nonseparable costs, under certain conditions. We analyze the tightness of the bound for the pairwise separable case and demonstrate that they are tight for certain degrees of asymmetry of the network.
	
	
	
	
	
	
	
	
	\addtolength{\textheight}{-11cm}   


	\bibliographystyle{ieeetr}
	\bibliography{biblio.bib}

\end{document}